\documentclass[12pt]{amsart}
\usepackage[top=1in, bottom=1in, left=1in, right=1in]{geometry}
\usepackage{amsfonts}
\usepackage{amsmath}
\usepackage{comment}
\usepackage{amssymb}
\usepackage{graphicx}
\usepackage{bbm}
\usepackage{comment}
\usepackage{mathrsfs}
\numberwithin{equation}{section}
\usepackage{times}

\usepackage{siunitx}
\usepackage[usenames,dvipsnames]{color}
\usepackage{comment}
\usepackage{xcolor}
\usepackage{mathtools}
\usepackage{bm}
\usepackage{esvect}
\usepackage{hyperref}

\hypersetup{
    colorlinks = true,
linkcolor={black},
urlcolor={blue},
citecolor={blue},    
urlcolor = {blue},
citebordercolor = {0.33 .58 0.33},
 linkbordercolor = {0.99 .28 0.23},
 breaklinks=true}
\newcommand{\Z}{\mathbb{Z}}

\newcommand{\m}{\mathrm{m}}

\newtheorem{thm}{Theorem}[section]

\newtheorem{conj}[thm]{Conjecture}

\theoremstyle{remark}

\usepackage{geometry}
\title{On a Conjecture about Sums Involving Farey Fractions}
\author{Anji Dong,  Xinyi Li, Vi Anh Nguyen}

\address{
Anji Dong: Department of Mathematics,
University of Illinois Urbana-Champaign,
Altgeld Hall, 1409 W. Green Street,
Urbana, IL, 61801, USA}
\email{anjid2@illinois.edu}

\address{
Xinyi Li: Department of Mathematics,
University of Illinois Urbana-Champaign,
Altgeld Hall, 1409 W. Green Street,
Urbana, IL, 61801, USA}
\email{xinyil25@illinois.edu} 

\address{
Vi Anh Nguyen: Department of Mathematics,
University of Illinois Urbana-Champaign,
Altgeld Hall, 1409 W. Green Street,
Urbana, IL, 61801, USA}
\email{vianhan2@illinois.edu}

\begin{document}
\setcounter{tocdepth}{1}
\keywords{Farey sequence, asymptotic formula, explicit error bounds}
\subjclass{Primary:11B57. Secondary:11N37, 11N56.}
\begin{abstract}
   In this paper, we prove a conjecture by Daniele Mundici on the sum of squared distances between consecutive elements in the $Q$-th Farey sequence for $Q\in\Z$ and $Q\geq 2$. 
\end{abstract}
\maketitle

\section{Introduction}
Let $Q\geq 2$ be an integer. The $Q$-th Farey sequence $F_Q$ is defined as follows:
\begin{align*}
    F_Q :=\left\{\frac{a}{q}: 1\leq a\leq q\leq Q,\ (a,q)=1\right\}.
\end{align*}

Assume $|F_Q|=N_Q$ such that
\begin{align*}
   F_Q :=\{\gamma_1,\gamma_2,\dots,\gamma_{N_Q}\} 
\end{align*}
with $\gamma_1<\gamma_2<\dots<\gamma_{N_Q}$. For simplicity, we will write $N=N_Q$, but keeping the dependence of $Q$ in mind. 

Farey sequences have been studied for a long time, with one of the more extensively studied topics being the distribution of spacings between consecutive Farey fractions in a subinterval $I$ of $(0,1]$. One direction is to explore the distribution of powers of the consecutive spacing. For example, the case $I=(0,1]$ has been considered in \cite{hall} for any power $m\geq 2$. Boca, Cobeli, and Zaharescu later extended the results to subintervals of $(0,1]$ in \cite{zaharescu}. Other results along this direction can be found in \cite{kanemitsu2,kanemitsu1,  moschevitin}. Another possible direction is to study the $h$-th level consecutive spacing for any $h\geq 2$, for example, see \cite{boca,zaharescu}. One can also find results in other directions in \cite{Huxley, kargaev}. 

In this paper, we go along the first direction and study the distribution when the power of consecutive spacing is $2$. Let 
\begin{align}
    S_2(Q) := \sum_{j=1}^{N-1} (\gamma_{j+1}-\gamma_j)^2,\label{def:S2(Q)}
\end{align}
which is the sum of squared distances between consecutive elements in the $Q$-th Farey sequence.  In \cite{moschevitin}, it was mentioned that this quantity is an important special case in the study of sums of powers of consecutive distances between Farey points because it can be interpreted as the average length of the interval $[\gamma_{j}, \gamma_{j+1})$ which a random uniformly distributed point in $(0,1]$ falls in. In \cite{hall}, Hall proved that
\begin{align}
    S_2(Q) = \frac{12}{\pi^2 Q^2}\left(\log Q + \gamma + 1/2 - \frac{\zeta'(2)}{\zeta(2)}\right) + O\left(\frac{\log^2 Q}{Q^3}\right).\label{eq: Hall's theorem}
\end{align}
Later, in \cite{kanemitsu2,kanemitsu1}, Kanemitsu et al. refined Hall's result by showing that the error term can be reduced to $O(\log Q/Q^3)$. The established asymptotic formula is as follows:
\begin{align}
 S_2(Q) = \frac{12}{\pi^2 Q^2}\left(\log Q + \gamma + 1/2 - \frac{\zeta'(2)}{\zeta(2)}\right) +\frac{4U(Q)\log Q}{Q^3}+ O\left(\frac{\log Q}{Q^3}\right),\label{eq:Kanemitsu's theorem}
\end{align}
where 
\[
U(x) = \sum_{Q\leq x}\frac{\mu(Q)}{Q}\left(\frac{x}{Q}-\left[\frac{x}{Q}\right]-\frac{1}{2}\right).
\]

Define
\begin{align}
    C(Q) := \frac{S_2(Q)\cdot Q^2}{\log Q}. \label{def: C(n)}
\end{align}
Upon checking the first $1000$ values of $Q$, Daniele Mundici came up with the following conjecture:
\begin{conj}\label{conj: main conjecture}
    \[
    C(Q)<3 \text{ for all } Q>1,
    \]
     where $C(Q)$ is defined in \eqref{def: C(n)}.
\end{conj}

In follow-up private conversations, Mundici suggested that Conjecture \ref{conj: main conjecture} might be provable assuming the Riemann Hypothesis. In this paper, we show that the assumption of the Riemann Hypothesis is not needed, and prove Mundici's original conjecture unconditionally. Our main result is the following.

\begin{thm}\label{thm: main theorem}
    \[
    C(Q)<3 \text{ for all } Q>1,
    \]
    where $C(Q)$ is defined in \eqref{def: C(n)}.
\end{thm}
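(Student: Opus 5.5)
Since consecutive Farey fractions $a/q<a'/q'$ in $F_Q$ satisfy $a'q-aq'=1$, we have $\gamma_{j+1}-\gamma_j=1/(qq')$. Consecutive denominators are exactly the pairs $(q,q')$ with $\gcd(q,q')=1$, $q+q'>Q$ and $q,q'\le Q$; each such pair occurs exactly once as an ordered consecutive pair (for $q,q'\ge1$). So I will start from the exact identity
\begin{align*}
S_2(Q)=\sum_{\substack{1\le q,q'\le Q\\ (q,q')=1,\ q+q'>Q}}\frac{1}{q^2q'^2},
\end{align*}
which reduces the conjecture to an explicit bound on this arithmetic sum.

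Next I remove the coprimality condition by M\"obius inversion. Writing $q=dm$, $q'=dn$, I get
\[
S_2(Q)=\sum_{d\le Q}\frac{\mu(d)}{d^4}\,T(Q/d),\qquad T(x):=\sum_{\substack{m,n\le x\\ m+n>x}}\frac{1}{m^2n^2}.
\]
For $T(x)$ I aim for an explicit estimate of the form $T(x)=\frac{2}{x^2}\bigl(2\log x+c_0\bigr)+E(x)$, with $|E(x)|\le c_1(\log x+1)/x^3$ and fully numerical constants. To get this, I fix $m$, sum over $n$ in the range $x-m<n\le x$ using $\sum_{n>y}n^{-2}=1/y+O(1/y^2)$ with explicit Euler--Maclaurin remainders, and then sum over $m$. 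This is where the $\log x$ comes from: $\sum_m \frac{1}{m^2}\bigl(\frac{1}{x-m}-\frac1x\bigr)$ contributes about $\frac{2}{x^2}\log x$ from each end.

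Plugging back in, the main term becomes
\[
\frac{4}{Q^2}\sum_{d\le Q}\frac{\mu(d)}{d^2}\Bigl(\log\frac{Q}{d}+c_0'\Bigr).
\]
Completing the sums to $1/\zeta(2)$ and $-\zeta'(2)/\zeta(2)^2$ and bounding the tails trivially by $\sum_{d>Q}\log d/d^2\ll \log Q/Q$ reproduces Hall's constant $12/\pi^2\approx1.216$. I will not need any cancellation from $\mu$; trivial absolute bounds suffice. This is why RH is unnecessary, and the term $U(Q)$ can simply be bounded by $|U(Q)|\le \sum_{d\le Q}1/(2d)$. The result will be an inequality of the form
\[
C(Q)\le \frac{12}{\pi^2}\Bigl(1+\frac{A}{\log Q}\Bigr)+\frac{B(\log Q+1)^2}{Q\log Q},
\]
with explicit $A\approx\gamma+1/2-\zeta'(2)/\zeta(2)\approx 2.65$ and some explicit $B$. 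For $\log Q\ge 3$ or so, the right-hand side is already below $3$ provided $B$ is moderate, and I would then determine an explicit threshold $Q_0$ beyond which it is $<3$.

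The main obstacle is the small-$Q$ regime. Here $12/\pi^2\cdot(1+2.65/\log Q)$ exceeds $3$ when $\log Q<1.8$, i.e.\ $Q\le 6$, and the error terms keep the bound from being effective until $Q_0$ is fairly large. I will therefore keep the constants in the $T(x)$ estimate as sharp as possible: using exact partial sums for small $m$, and noting that $E(x)$ should have a favorable sign for much of its range. Then I will handle $2\le Q\le Q_0$ by direct computation of $S_2(Q)$ via the identity above, which is cheap since it involves only $O(Q^2)$ terms. If $Q_0$ turns out to be too large for computation, I would refine the error analysis by splitting the $d$-sum at $d\le\sqrt Q$ and treating large $d$ through the trivial bound $T(y)\le \pi^4/36$.
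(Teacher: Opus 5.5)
Your strategy is the paper's: the coprime-pair identity, M\"obius inversion to $\sum_{d\le Q}\mu(d)d^{-4}T(Q/d)$ (your $T(Q/d)$ is the paper's $A(\lfloor Q/d\rfloor)$), an explicit expansion of $T$ with remainder $O((\log x+1)/x^3)$, an explicit decreasing majorant for $C(Q)$ (the paper gets $G(374)<3$), and direct computation for small $Q$. The one methodological difference is the inner sum. The paper uses the exact identity
\[
\frac{1}{m^2(s-m)^2}=\frac{1}{s^2}\Bigl(\frac{1}{m^2}+\frac{1}{(s-m)^2}\Bigr)+\frac{2}{s^3}\Bigl(\frac{1}{m}+\frac{1}{s-m}\Bigr)
\]
to get $A(k)=4\sum_{s>k}H_{s-1}s^{-3}-\sum_{s>k}s^{-4}$ exactly. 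After that it only needs $H_n=\log n+\gamma+\frac{1}{2n}+O(n^{-2})$ and an integral comparison. You instead sum over $n$ for fixed $m$. That is workable, but it needs more care than you indicate.

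Several of your concrete claims are wrong and must be fixed.

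First, the leading term of $T$ is off by a factor $2$. Since $\frac{1}{m^2}\bigl(\frac{1}{x-m}-\frac1x\bigr)=\frac{1}{x^2}\bigl(\frac1m+\frac{1}{x-m}\bigr)$, each end contributes $\log x/x^2$, not $2\log x/x^2$. In fact $A(k)=\frac{2\log k+2\gamma+1}{k^2}+O(\log k/k^3)$. Your normalization $T(x)=\frac{2}{x^2}(2\log x+c_0)+E(x)$ therefore makes the claimed bound on $E$ false. Your main term $\frac{4}{Q^2}\sum_{d}\mu(d)d^{-2}\log(Q/d)$ would give $24/\pi^2$, not the $12/\pi^2$ you then use.

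Second, $\gamma+\frac12-\zeta'(2)/\zeta(2)\approx 1.647$ (since $\zeta'(2)/\zeta(2)\approx-0.570$), not $2.65$. So the main term alone exceeds $3$ only for $Q\le 3$.

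Third, the step $\sum_{n>y}n^{-2}=1/y+O(y^{-2})$ as written only yields $E(x)=O(x^{-2})$, which is the same order as $c_0/x^2$.
\begin{itemize}
\item For small $m$, you must show that the $O(y^{-2})$ errors of the two tails $\sum_{n>x-m}$ and $\sum_{n>x}$ cancel down to $O(m/x^3)$. This requires keeping the $-1/(2y^2)$ term.
\item Where $x-m$ is bounded, the expansion is useless (and undefined at $m=x$). The contribution there, essentially $x^{-2}\sum_{0\le j<x}\sum_{n>j}n^{-2}$, must be evaluated exactly.
\end{itemize}
The paper's identity avoids this issue altogether.

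Fourth, your fallback fails. Bounding $T(Q/d)\le\pi^4/36$ for $d>\sqrt Q$ gives an error bound of size $\asymp Q^{-3/2}$ for $S_2(Q)$, i.e.\ $\asymp\sqrt Q/\log Q$ for $C(Q)$, which is unbounded. A trivial bound on $T$ is affordable only for $d$ comparable to $Q$; for $d>Q/2$ one has $T(Q/d)=1$ exactly.

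Finally, a minor point that the paper shares. With $F_Q\subset(0,1]$, the pair $(q,q')=(1,Q)$, coming from $0/1<1/Q$, does not occur. So your identity overestimates $S_2(Q)$ by $Q^{-2}$. This is harmless, since you only need an upper bound.
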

Along the result, we give an asymptotic formula of $S_2(Q)$ with an explicit error bound. 
\begin{thm}\label{thm: Theorem 2}
    \begin{align*}
    S_2(Q) &= \frac{12\log Q}{\pi^2 Q^2}-\frac{2}{Q^2}\frac{\zeta'(2)}{\zeta(2)^2} +(2\gamma+1)\frac{6}{Q^2\pi^2}+R_{14},
\end{align*}
where 
\[
R_{14}\leq \frac{64(\log Q)^2+106\log Q+269}{Q^3}.
\]
\end{thm}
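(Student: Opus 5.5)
We reduce $S_2(Q)$ to an explicit arithmetic sum and then evaluate that sum with fully explicit error terms. The starting point is the classical fact that consecutive Farey fractions $a/q<a'/q'$ in $F_Q$ satisfy $a'q-aq'=1$, so $\gamma_{j+1}-\gamma_j=1/(qq')$. Moreover, the pairs $(q,q')$ of consecutive denominators are exactly the coprime pairs with $1\le q,q'\le Q$ and $q+q'>Q$. Hence
\begin{align*}
S_2(Q)=\sum_{\substack{1\le q,q'\le Q\\ (q,q')=1,\ q+q'>Q}}\frac{1}{q^2q'^2}.
\end{align*}
We need to check the endpoint convention: $F_Q$ runs from $1/Q$ to $1/1$, which omits the pair starting at $0/1$, a single term of size $1/Q^2$. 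This affects only the bookkeeping, and we make sure the count matches exactly.

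Next we remove the coprimality condition by M\"obius inversion. Writing $q=dm$, $q'=dn$, we get
\begin{align*}
S_2(Q)=\sum_{d\le Q}\frac{\mu(d)}{d^4}\,T(Q/d),\qquad T(x):=\sum_{\substack{m,n\le x\\ m+n>x}}\frac{1}{m^2n^2}.
\end{align*}
The core of the argument is an explicit asymptotic for $T(x)$. For each $m\le x$, the inner sum over $x-m<n\le x$ is $\sum 1/n^2$, and this equals $\frac{1}{x-m}-\frac1x$ up to an Euler--Maclaurin error of order $(x-m)^{-2}$, which we keep explicit with constants. Summing $\frac{1}{m^2}\bigl(\frac{1}{x-m}-\frac{1}{x}\bigr)$ by partial fractions,
\begin{align*}
\frac{1}{m^2(x-m)}=\frac{1}{xm^2}+\frac{1}{x^2m}+\frac{1}{x^2(x-m)},
\end{align*}
yields $T(x)=\frac{2\log x+2\gamma+1}{x^2}+E(x)$. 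Here $\gamma$ enters through the harmonic sums $\sum_{m\le x}1/m$ and $\sum_{m<x}1/(x-m)$, and we aim for $|E(x)|\le (a\log x+b)/x^3$ with explicit $a,b$. Small $x$ (say $x<2$, where $T$ has only a few terms) can be handled directly, since there the bound just has to absorb $O(1)$ terms.

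Finally we assemble the pieces:
\begin{align*}
\sum_{d\le Q}\frac{\mu(d)}{d^4}\cdot\frac{d^2(2\log Q-2\log d+2\gamma+1)}{Q^2}.
\end{align*}
We complete the sums $\sum\mu(d)/d^2=1/\zeta(2)=6/\pi^2$ and $-\sum\mu(d)\log d/d^2=\zeta'(2)/\zeta(2)^2$. The tails beyond $Q$ are bounded by $\sum_{d>Q}d^{-2}\le 1/(Q-1)$ and $\sum_{d>Q}\log d/d^2\le(\log Q+1)/(Q-1)$, and they contribute $O(\log Q/Q^3)$ with explicit constants. The error contributions $\sum_{d}d^{-4}\,|E(Q/d)|\le \sum_d d^{-1}(a\log Q+b)/Q^3$ produce the $(\log Q)^2$ term via $\sum_{d\le Q}1/d\le\log Q+1$. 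This matches the shape $64(\log Q)^2+106\log Q+269$. Collecting constants gives the main terms $\frac{12\log Q}{\pi^2Q^2}-\frac{2\zeta'(2)}{Q^2\zeta(2)^2}+\frac{6(2\gamma+1)}{\pi^2Q^2}$.

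The main obstacle is the explicit control of $E(x)$ uniformly in $x=Q/d$. When $x$ is small, the asymptotic is poor and the $1/(x-m)^2$ errors near $m\approx x$ are not small. We would handle this by treating the $m$ close to $x$ (those with $x-m<1$, so that the inner sum is a single term or empty) separately, and by using the crude bound $T(x)\le\zeta(2)^2$ when $d>Q/2$. We then track the constants carefully enough to fit within $64,106,269$.
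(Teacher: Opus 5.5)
Your reduction to $\sum_{d\le Q}\mu(d)d^{-4}T(Q/d)$ and your final assembly are in line with the paper. The central step, however, does not follow from the method you describe: the claim $T(x)=\frac{2\log x+2\gamma+1}{x^2}+E(x)$ with $|E(x)|\le(a\log x+b)/x^3$. The failure is also not a small-$x$ effect.

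Suppose the Euler--Maclaurin remainder of $\sum_{x-m<n\le x}n^{-2}$ is bounded by $C(x-m)^{-2}$. Summing against $m^{-2}$ gives $\sum_m C\,m^{-2}(x-m)^{-2}$. The terms with $m$ near $x$ already contribute $\gg C/x^2$ for every $x$, which is the same order as the $(2\gamma+1)/x^2$ you want to isolate.

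This is not merely a lossy bound. Your partial-fraction main term is $\frac{1}{x^2}\bigl(H_{\lfloor x\rfloor}+\sum_{m\le x}\frac{1}{x-m}\bigr)$, and $\sum_{m\le x}\frac{1}{x-m}=\log x-\psi(\{x\})+O(1/x)$. That sum contains $1/\{x\}$, which is undefined when $Q/d$ is an integer, e.g.\ $d=1$. So the main term is not $(2\log x+2\gamma+1)/x^2$.

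Passing to $k=\lfloor x\rfloor$ and isolating $m=k$ does not rescue it. The inner sum at $m=k$ is all of $B_k=\sum_{n\le k}n^{-2}$, not ``a single term or empty'', and it contributes about $\zeta(2)/k^2$. The remaining remainders add up to $(1-\zeta(2))/k^2+O(\log k/k^3)$, because $\sum_{j\ge1}\bigl(t_j-\tfrac1j\bigr)=1-\zeta(2)$ with $t_j=\sum_{n>j}n^{-2}$. The ``$+1$'' in $2\gamma+1$ is exactly $\zeta(2)+(1-\zeta(2))$, so it comes from terms your plan files under $E(x)$. The contributions near the antidiagonal $m\approx x$ must therefore be evaluated exactly, not bounded.

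The paper avoids this entirely. With $k=\lfloor Q/d\rfloor$ it writes $A(k)=B_k^2-\sum_{m+n\le k}m^{-2}n^{-2}$. It then applies $\frac{1}{m^2(s-m)^2}=\frac{1}{s^2}\bigl(\frac{1}{m^2}+\frac{1}{(s-m)^2}\bigr)+\frac{2}{s^3}\bigl(\frac1m+\frac{1}{s-m}\bigr)$ with $s=m+n$, and uses $A(k)\to0$ to get the exact tail representation $A(k)=4\sum_{s>k}H_{s-1}s^{-3}-\sum_{s>k}s^{-4}$. Every summand there is $O(\log s/s^3)$. Hence inserting $H_{s-1}=\log s+\gamma+O(1/s)$ and comparing with $\int_k^\infty4(\log t+\gamma)t^{-3}\,dt=\frac{2\log k+2\gamma+1}{k^2}$ costs only $O(\log k/k^3)$. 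The $+1$ appears from the integration by parts, and the passage from $k$ to $Q/d$ is then done in the outer $d$-sum.

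Separately, the omitted gap at $0/1$ is not mere bookkeeping. It equals $Q^{-2}$, the same order as the secondary terms. The stated constants are those of the full coprime-pair sum, i.e.\ the sequence including $0/1$, which is what both you and the paper actually evaluate.
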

 The proof we give here is different from Hall's proof in \cite{hall}.
\section{Proof of Theorem \ref{thm: Theorem 2}}

In this section, we outline the proof of Theorem \ref{thm: Theorem 2}.

\begin{proof}
Recall some basic properties of Farey sequence: any two consecutive element $a/q$ and $a'/q'$ must satisfy $a/q-a'/q' = 1/qq'$, $q+q'>Q$, and $(q,q')=1$ assuming $a/q$ precedes $a'/q'$. Therefore, $S_2(Q)$ can be rewritten as
\begin{align}
     S_2(Q) &= \sum_{\substack{1\leq q,q'\leq Q\\q+q'>Q\\(q,q')=1} }\frac{1}{(qq')^2}\notag\\
     &=\sum_{\substack{(q,q')\in\Omega\\(q,q')=1}} \frac{1}{q^2q'^2},
\end{align}
where $\Omega$ is the region in the plane which is enclosed by the lines $x=1,\ y =1, \ y=-x+Q, x=Q, $ and $y=Q$. Note that $\#\{(q,q')\in\Omega\}$ counts the integer lattice points in $\Omega$.

Upon detecting the coprimality condition using the M\"obius function, we may further rewrite $S_2(Q)$ as
\begin{align*}
    S_2(Q) &= \sum_{\substack{(q,q')\in\Omega}} \frac{1}{q^2q'^2}\sum_{\substack{d\mid q\\d\mid q'}}\mu(d)\\
    &=\sum_{1\leq d\leq Q}\frac{\mu(d)}{d^4}\sum_{\substack{(dm,dn)\in\Omega\\m,n\leq Q/d}}\frac{1}{m^2n^2}\\
    &=\sum_{1\leq d\leq Q}\frac{\mu(d)}{d^4}\sum_{\substack{(m,n)\in\Omega/d}}\frac{1}{m^2n^2},
\end{align*}
where $\Omega/d$ denotes the region in the plane which is enclosed by the lines $x=1,\ y =1, \ y=-x+Q/d, x=Q/d, $ and $y=Q/d$. In other words,
\begin{align}
    S_2(Q) &= \sum_{1\leq d\leq Q}\frac{\mu(d)}{d^4}\sum_{\substack{1\leq m,n\leq \lfloor Q/d\rfloor\\m+n\geq \lfloor Q/d\rfloor +1}}\frac{1}{m^2n^2}.
  \label{eq: S2(Q) intermediate step}
\end{align}
Denote $\lfloor Q/d\rfloor$ by $k$ for simplicity and the inner sum by $A(k)$. Then,  
\begin{align}
   A(k) =  \left(\sum_{1\leq m\leq k}\frac{1}{m^2}\right)^2-\sum_{\substack{1\leq m,n\leq k\\m+n\leq k }}\frac{1}{m^2n^2} = B_k^2-\sum_{s=2}^k\sum_{m=1}^{s-1}\frac{1}{m^2(s-m)^2},\label{eq:another form}
\end{align}
where
\[
B_n := \sum_{1\leq m\leq n}\frac{1}{m^2},
\]
and the second equality follows from writing $s=m+n$. We focus on the second summand in \eqref{eq:another form}. Observe that 
\begin{align*}
   \sum_{s=2}^k\sum_{m=1}^{s-1}\frac{1}{m^2(s-m)^2} &=\sum_{s=2}^k\sum_{m=1}^{s-1}\frac{1}{s^2}\left( \frac{1}{m^2} + \frac{1}{(s-m)^2} \right) + \frac{2}{s^3}\left( \frac{1}{m} + \frac{1}{s-m} \right) \\
   &=\sum_{s=2}^k\frac{2B_{s-1}}{s^2}+\frac{4H_{s-1}}{s^3},
\end{align*}
where 
$H_{n}$ is the $n$-th harmonic number, which is defined as
\[
H_n := \sum_{m=1}^n \frac{1}{m}.
\]
Therefore, 
\begin{align*}
    A(k) = B_k^2 - 2\sum_{s=2}^k\frac{B_{s-1}}{s^2}-4\sum_{s=2}^k\frac{H_{s-1}}{s^3}.
\end{align*}
Now the second summand can be rewritten as 
\begin{align*}
   2\sum_{s=2}^k\frac{B_{s-1}}{s^2} &= 2\sum_{s=2}^k\sum_{m=1}^{s-1}\frac{1}{m^2s^2} = 2\sum_{1\leq m<s\leq k}\frac{1}{m^2s^2}\\
   &=\sum_{1\leq m\neq s\leq k}\frac{1}{m^2s^2} = B_k^2-F_k,
\end{align*}
where 
\[
F_k = \sum_{m=1}^k\frac{1}{m^4}.
\]
Substituting back, we obtain
\begin{align*}
    A(k) = F_k-4\sum_{s=2}^k\frac{H_{s-1}}{s^3}.
\end{align*}
Note that if $k$ goes to infinity, then the original definition of $A(k)$ implies that $A(k)$ goes to $0$, and $F_k$ goes to $\zeta(4)$. Therefore, we may write
\begin{align*}
    4\sum_{s=2}^k\frac{H_{s-1}}{s^3} = \zeta(4)-4\sum_{s\geq k+1}\frac{H_{s-1}}{s^3}. 
\end{align*}
Substituting this back, we obtain
\begin{align*}
    A(k) &= F_k - \zeta(4)+4\sum_{s\geq k+1}\frac{H_{s-1}}{s^3}\\
    &=4\sum_{s\geq k+1}\frac{H_{s-1}}{s^3}-\sum_{s\geq k+1}\frac{1}{s^4}.
\end{align*}

By \cite[p. 114]{knuth}, for $n\geq 1$, 
\[
H_n = \log n +\gamma +\frac{1}{2n}+R_1,
\]
where $|R_1|<\frac{1}{10n^2}$. Therefore,
\begin{align*}
    A(k) = \sum_{s\geq k+1}\frac{4(\log s+\gamma )}{s^3}+R_2,
\end{align*}
where $$|R_2|\leq \sum_{s\geq k}\frac{3}{s^4}+\frac{2}{5s^5}\leq \frac{11}{10k^3}.$$
The main term is a decreasing function that satisfies 
\[
\int_{k+1}^\infty \frac{4(\log t+\gamma )}{t^3}dt\leq \sum_{s\geq k+1}\frac{4(\log s+\gamma )}{s^3}\leq \int_{k}^\infty \frac{4(\log t+\gamma )}{t^3}dt.
\]
Therefore, 
\begin{align*}
    A(k) = \int_{k}^\infty \frac{4(\log t+\gamma )}{t^3}dt+R_3,
\end{align*}
where
\begin{align*}
    |R_3|\leq \int_{k}^{k+1}\frac{4(\log t+\gamma )}{t^3}dt+\frac{11}{10k^3}.
\end{align*}
Using integration by parts, we finally obtain that
\begin{align*}
    A(k) =  \frac{2\log k+2\gamma +1}{k^2}+R_4,
\end{align*}
where 
\begin{align*}
    |R_4|\leq \frac{4(\log(k+1) + \gamma)+11/10}{k^3}\leq \frac{4\log k+8}{k^3}.
\end{align*}
Substituting it back in \eqref{eq: S2(Q) intermediate step}, 
\begin{align}
     S_2(Q) &= 2\sum_{1\leq d\leq Q}\frac{\mu(d)}{d^4}\cdot\frac{\log k}{k^2} +(2\gamma+1)\sum_{1\leq d\leq Q}\frac{\mu(d)}{d^4} \cdot\frac{1}{k^2}+R_5\label{eq: new S2(Q) form}
\end{align}
where 
\begin{align*}
    |R_5|&\leq 4\sum_{1\leq d\leq Q}\frac{1}{d^4}\cdot\frac{\log k}{k^3}+8\sum_{1\leq d\leq Q}\frac{1}{d^4}\cdot\frac{1}{k^3}\\
    &\leq \frac{32}{Q^3}\sum_{1\leq d\leq Q}\frac{\log(Q/d)}{d}+\frac{64}{Q^3}\sum_{1\leq d\leq Q}\frac{1}{d}\\
    &\leq \frac{32(\log Q)^2+64\log Q}{Q^3},
\end{align*}
and the second inequality follows from the fact that $\lfloor x\rfloor \geq x/2$. Recall that $k=\lfloor Q/d\rfloor$. We have \begin{align*}
    \sum_{1\leq d\leq Q}\frac{\mu(d)}{d^4}\frac{1}{k^2} &= \sum_{1\leq d\leq Q}\frac{\mu(d)}{d^4}\frac{1}{ (Q/d)^2}+\sum_{1\leq d\leq Q}\frac{\mu(d)}{d^4}\left(\frac{1}{\lfloor Q/d\rfloor^2}-\frac{1}{ (Q/d)^2}\right).
\end{align*}
Using $\sum_{1\leq d\leq \infty}\dfrac{\mu(d)}{d^2}=\dfrac{1}{\zeta(2)}=\dfrac{6}{\pi^2}$, we obtain
\begin{align*}
    \sum_{1\leq d\leq Q}\frac{\mu(d)}{d^4}\frac{1}{k^2} &= \frac{6}{Q^2\pi^2}+\sum_{1\leq d\leq Q}\frac{\mu(d)}{d^4}\left(\frac{1}{\lfloor Q/d\rfloor^2}-\frac{1}{ (Q/d)^2}\right)+R_6.
\end{align*}
where $|R_6|\leq 1/Q^3$. Now split the range of $d$ into $d\leq Q/2$ and $d>Q/2$. Then
\begin{align*}
    \Big|\sum_{1\leq d\leq Q/2}\frac{\mu(d)}{d^4}\left(\frac{1}{k^2}-\frac{1}{ (Q/d)^2}\right)\Big| &\leq  2\sum_{1\leq d\leq Q/2}\frac{1}{d^4}\frac{1}{\lfloor Q/d\rfloor^2Q/d}\\
    &\leq 2 \sum_{1\leq d\leq Q/2}\frac{1}{d^4}\frac{1}{(Q/2d)^2\cdot Q/d}\\
    &\leq 8Q^{-3}\sum_{1\leq d\leq Q/2}\frac{1}{d}\leq 8\frac{\log Q}{Q^3}.
\end{align*}
 Now when $d>Q/2$, $\lfloor Q/d\rfloor=1$, so 
\begin{align*}
    \Big|\sum_{Q/2<d\leq Q}\frac{\mu(d)}{d^4}\left(\frac{1}{\lfloor Q/d\rfloor^2}-\frac{1}{ (Q/d)^2}\right)\Big| &\leq \sum_{Q/2<d\leq Q}\frac{8}{d^4}\leq \frac{64}{Q^3},
\end{align*}
 by observing that $1/d^4\leq 1/(Q/2)^4$. Therefore, 
\begin{align}
   \sum_{1\leq d\leq Q}\frac{\mu(d)}{d^4k^2}=  \sum_{1\leq d\leq Q}\frac{\mu(d)}{d^4}\frac{1}{\lfloor Q/d\rfloor^2}  = \frac{6}{Q^2\pi^2}+R_8,\label{eq:no log contribution}
\end{align}
where $|R_8|\leq (64+8\log Q)Q^{-3}$.  Applying the Mean Value Theorem, we have
\[
   k^{-2} =((Q/d)^{-2}+\delta_1)
\]
with
$|\delta_1|\leq \frac{16}{(Q/d)^3}$ and
\[
|\log k-\log (Q/d)| =\log\frac{Q/d}{k}\leq \log \frac{Q/d}{Q/d-1}\leq 4(Q/d)^{-1}
\]
for $d\leq Q/2$ and $|\log k-\log (Q/d)| =\log (Q/d)<4(Q/d)^{-1}$ for $d>Q/2$. Therefore, 
\begin{align*}
    \sum_{1\leq d\leq Q}\frac{\mu(d)\log k}{d^4k^2} &= \sum_{1\leq d\leq Q}\frac{\mu(d)}{d^4}\left((\log(Q/d)+\delta_2)((Q/d)^{-2}+\delta_1)\right)\\
    &= \sum_{1\leq d\leq Q}\frac{\mu(d)}{d^4}\left(\frac{d^2\log(Q/d)}{Q^2}+R_{9}\right)
\end{align*}
where $|\delta_2|\leq 4(Q/d)^{-1}$  and $|R_{9}|\leq \frac{16d^3\log Q}{Q^3}+\frac{4d^3}{Q^3}+\frac{64d^4}{Q^4}$. Upon simplification and using the fact that $|\sum_{d\leq N}\mu(d)/d|\leq 1$ for any $N\in\mathbb{N}$, we get
\begin{align*}
    \sum_{1\leq d\leq Q}\frac{\mu(d)\log k}{d^4k^2}&= \frac{\log Q}{Q^2}\sum_{1\leq d\leq Q/2}\frac{\mu(d)}{d^2}-\frac{1}{Q^2}\sum_{1\leq d\leq Q/2}\frac{\mu(d)\log d}{d^2}+R_{10}\\
    &=\frac{\log Q}{Q^2}\left(\frac{6}{\pi^2}+R_{11}\right)-\frac{1}{Q^2}\left(\frac{\zeta'(2)}{\zeta(2)^2}+R_{12}\right)+R_{10},
\end{align*}
where $|R_{10}|\leq \frac{16(\log Q)^2+4\log Q+64}{Q^3}$, $|R_{11}|\leq 4/Q$, and $|R_{12}|\leq \frac{4\log Q}{Q}$. Thus, \begin{align}
     \sum_{1\leq d\leq Q}\frac{\mu(d)\log k}{d^4k^2}&=\frac{6\log Q}{\pi^2 Q^2}-\frac{1}{Q^2}\frac{\zeta'(2)}{\zeta(2)^2}+R_{13},\label{eq:log power contribution}
\end{align}
where $|R_{13}|\leq\frac{16(\log Q)^2+12\log Q+64}{Q^3}$. Substituting \eqref{eq:no log contribution} and \eqref{eq:log power contribution} back in \eqref{eq: new S2(Q) form}, and estimating $R_5$ similarly, we finally arrive at
\begin{align*}
    S_2(Q) &= \frac{12\log Q}{\pi^2 Q^2}-\frac{2}{Q^2}\frac{\zeta'(2)}{\zeta(2)^2} +(2\gamma+1)\frac{6}{Q^2\pi^2}+R_{14},
\end{align*}
where 
\[
|R_{14}|\leq \frac{64(\log Q)^2+106\log Q+269}{Q^3}.
\]
This finishes the proof of Theorem \ref{thm: Theorem 2}.  
\end{proof}

\section{Proof of Theorem \ref{thm: main theorem}}

Now we are ready to prove Conjecture \ref{conj: main conjecture}. By Theorem \ref{thm: Theorem 2}, we have
\[
C(Q) = \frac{12}{\pi^2}+\frac{-2\frac{\zeta'(2)}{\zeta(2)^2}+(2\gamma+1)\frac{6}{\pi^2}}{\log Q}+\frac{Q^2R_{14}(Q)}{\log Q}, 
\]
hence,
\begin{align}
C(Q) \leq \frac{12}{\pi^2}+\frac{-2\frac{\zeta'(2)}{\zeta(2)^2}+(2\gamma+1)\frac{6}{\pi^2}}{\log Q}+\frac{64\log Q}{Q}+\frac{106}{Q}+\frac{269}{Q\log Q}.\label{eq: C(n) upper bound}
\end{align}
Denote the function in terms of $Q$ on the right side of \eqref{eq: C(n) upper bound} by $G(Q)$. Then, observe that 
\[
G'(x) =-\frac{-2\frac{\zeta'(2)}{\zeta(2)^2}+(2\gamma+1)\frac{6}{\pi^2}}{x(\log x)^2}+\frac{64(1-\log x)}{x^2}-\frac{106}{x^2}-\frac{269(\log x+1)}{x^2(\log x)^2}<0
\]
for $x\geq 2$. Therefore, $G(x)$ is strictly decreasing for $x\geq 2$. A direct computation shows that 
\[
G(374)<3. 
\]
Thus, it only remains to check for integers $2\leq Q\leq 373$. Numerical computations confirmed that the maximum value of $C(Q)$ for $2\leq Q\leq 500$ is approximately $2.885390081777927$, which occurs at $Q=2$. This finishes the proof of Theorem \ref{thm: main theorem}. 

\section{Some Open Problems}
We conclude the paper with some open questions for interested readers. 

Let $I\subseteq(0,1]$ be a fixed interval and let $\gamma_1<\gamma_2<\cdots<\gamma_{N_I(Q)}$ denote the Farey fractions of order $Q$ from $I$. Then, one may generalize the definition of $S_2(Q)$ in \eqref{def:S2(Q)} as \begin{align*}
    S_{2,h, I}(Q) = \sum_{j=1}^{N_I(Q)} (\gamma_{j+h}-\gamma_j)^2. 
\end{align*}  
Note that this generalization goes along the second direction mentioned in the introduction. When $h=1$, it was proved in \cite{zaharescu} that 
\[
S_{2,1,I}(Q)= |I| S_2(Q)+2c_IQ^{-2}+O_\varepsilon(Q^{-21/10+\varepsilon}),
\]
where $c_I$ is some real constant depending on $I$ and vanishes when $I = (0,1]$. It's natural to ask whether Mundici's conjecture admits an analogous formulation in this localized setting. In other words, define 
\[
C(Q,I) = \frac{S_{2,1,I}(Q)Q^2}{|I|\log Q}.
\]
Does there exist a result analogous to Theorem \ref{thm: main theorem}, stating that
\[
C(Q,I)<K_I
\]
for some \textbf{effectively computable constants} $K_I$ and $Q_I$ that possibly depend on $I$, and for all $Q > Q_I$? Moreover, can one take $K_I$ to be an absolute constant independent of $I$? A more general open problem would be to generalize such results to $S_{2,h,I}(Q)$ for any $h\geq 1$.

\section*{Funding}
A.D. is supported by the Shaff-Andrews Fellowship, Department of Mathematics, University of Illinois Urbana-Champaign.
\section*{Acknowledgements}
The authors are grateful to Daniele Mundici for raising the question that led to this work.

\end{document}